\title[Infinite dimensional excellent rings]
{Infinite dimensional excellent rings} 
\author{Hiromu Tanaka} 
\subjclass[2010]{13E05, 13C15.}
\keywords{infinite dimensional, excellent rings}
\address{Graduate School of Mathematical Sciences, 
The University of Tokyo, 
3-8-1 Komaba, Meguro-ku, Tokyo 153-8914, JAPAN} 
\email{tanaka@ms.u-tokyo.ac.jp}
\newcommand{\Reg}[0]{{\operatorname{Reg}}}
\newcommand{\Spec}[0]{{\operatorname{Spec}}}
\newtheorem{thm}{Theorem}[section]
\newtheorem{lem}[thm]{Lemma}
\newtheorem{prop}[thm]{Proposition}
\newtheorem{claim}[thm]{Claim}
\theoremstyle{definition}
\newtheorem{dfn}[thm]{Definition}
\newtheorem{rem}[thm]{Remark}
\newtheorem{nota}[thm]{Notation}
\newcommand{\p}{\mathfrak{p}}
\newcommand{\q}{\mathfrak{q}}
\newcommand{\m}{\mathfrak{m}}
\newcommand{\n}{\mathfrak{n}}
\newcommand{\Z}{\mathbb{Z}}
\begin{document}

\maketitle

\begin{abstract}
In this note, we prove that there exist infinite dimensional excellent rings. 
\end{abstract}

\tableofcontents

\section{Introduction}

One of important concepts in commutative ring theory is the notion of noetherian rings. 
On the other hand, it turns out that some pathological phenomena could happen for noetherian rings 
(cf. \cite[Appendix]{Nag62}). 
To remedy the situation, Grothendieck introduced a more restrictive but well-behaved 
class of rings, so-called excellent rings \cite[Ch. IV, \S 7]{Gro65}. 
The purpose of this note is to prove the following theorem. 

\begin{thm}[cf. Theorem \ref{t-main2}]\label{t-main1}
There exist excellent rings whose dimensions are infinite. 
\end{thm}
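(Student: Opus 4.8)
The plan is to revisit Nagata's classical example of an infinite dimensional noetherian ring and to check that it is in fact excellent. Fix a field $k$, let $S=k[x_1,x_2,\dots]$ be the polynomial ring in countably many variables, and partition the variables into consecutive finite blocks $G_1,G_2,\dots$ with $|G_i|=d_i$ strictly increasing. Set $\p_i:=(G_i)\subseteq S$, let $T:=S\setminus\bigcup_i\p_i$, and put $R:=T^{-1}S$. First I would recall Nagata's theorem that $R$ is noetherian and that its maximal ideals are exactly the $\p_i R$; since $R_{\p_i R}\cong S_{\p_i}$ is a regular local ring of dimension $d_i$, one gets $\dim R=\sup_i d_i=\infty$. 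This settles everything except excellence.

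For the excellence the first observation is that each local ring $R_{\p_i R}\cong S_{\p_i}$ is a localization of $k(\{x_j:x_j\notin G_i\})[G_i]$, a polynomial ring in $d_i$ variables over a field, hence a localization of a finitely generated algebra over a field, and therefore excellent. I would then invoke two standard local-to-global reductions: a noetherian ring is a $G$-ring as soon as all of its localizations at maximal ideals are $G$-rings, and a noetherian ring all of whose localizations at maximal ideals are universally catenary is itself universally catenary (catenarity of a finite type algebra $B$ can be tested at the maximal ideals of $B$, each of which pulls back into some universally catenary $R_{\p_i R}$, whose finite type algebras are catenary). Thus $R$ is a universally catenary $G$-ring.

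The remaining and main point is the $J$-2 condition, openness of the regular locus of every finite type $R$-algebra; this is exactly where infinite dimension could bite, since the usual shortcut ``regular of finite Krull dimension $\Rightarrow$ excellent'' is unavailable. The structural fact I would exploit is that $R$ is a filtered union $R=\bigcup_n R_n=\varinjlim_n R_n$ of excellent rings: with $A_n:=k[x_1,\dots,x_{N_n}]$ for $N_n=d_1+\dots+d_n$ and $R_n:=T_n^{-1}A_n$ for the induced multiplicative set, each $R_n$ is a localization of a finitely generated $k$-algebra (hence excellent), and each transition map $R_n\to R_{n+1}$ is a localization of the smooth extension $A_n\hookrightarrow A_{n+1}$, hence regular (flat with geometrically regular fibers). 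Because $R$ is noetherian the induced map $R_n\to R$ is flat with noetherian fibers, and as a filtered colimit of regular maps it is again regular. Given a finite type $R$-algebra $B$, I would descend it to a finite type $R_n$-algebra $B_n$ with $B=B_n\otimes_{R_n}R$; the base-changed map $B_n\to B$ is then regular, so for each $\q\in\Spec B$ lying over $\q_n\in\Spec B_n$ the flat local homomorphism $(B_n)_{\q_n}\to B_\q$ has regular fiber, forcing $B_\q$ to be regular exactly when $(B_n)_{\q_n}$ is. Hence $\Reg(B)$ is the preimage of $\Reg(B_n)$ under $\Spec B\to\Spec B_n$; the latter is open because $R_n$ is excellent, so $\Reg(B)$ is open and $R$ is $J$-2.

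I expect the main obstacle to be the verification that $R_n\to R$ is genuinely a regular homomorphism, that is, that the filtered colimit of the smooth transition maps preserves geometric regularity of the (noetherian) fibers, since this is precisely what makes the $J$-2 argument go through uniformly, independently of the Krull dimension. Once this is secured, combining the $J$-2 property with the universal catenarity and $G$-ring properties obtained from the local analysis shows that $R$ is excellent, which proves Theorem \ref{t-main1}.
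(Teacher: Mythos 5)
Your proposal is correct in outline and follows the same architecture as the paper: the same Nagata ring, the same local analysis (every local ring of $R$ is a localisation of a finitely generated algebra over a field) to dispose of the G-ring and universal catenarity conditions, and the same descent of a finite type $R$-algebra to a subring involving only finitely many of the variables to prove J-2. The one genuine divergence is the tool used to transfer regularity across the base change from the finite-variable model to the full ring. You phrase it as regularity of the homomorphism $R_n \to R$, deduced from $R$ being a filtered colimit of essentially smooth $R_n$-algebras with noetherian fibres --- essentially the easy direction of Popescu's theorem --- and you rightly flag this as the point needing real work: ``a filtered colimit of regular maps is again regular'' is not a quotable fact as stated (one needs noetherianity of the fibres, and even then the verification that a noetherian local ring which is a flat filtered union of regular local rings is regular requires an argument). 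The paper sidesteps exactly this issue by working with formal smoothness instead: the inclusion $A' \to A$ of polynomial rings, even in infinitely many variables, is formally smooth (EGA $0_{\rm IV}$, Cor.\ 19.3.3); formal smoothness is stable under composition, base change and localisation; and a local ring essentially of finite type over a field is regular if and only if it is formally smooth over the prime field $k_0$ (EGA $0_{\rm IV}$, Prop.\ 22.6.7). The two directions of the transfer then become descent of regularity along a faithfully flat map and composition of formally smooth maps $k_0 \to D'_{\q'} \to D_{\q}$, with no analysis of the fibres of the infinite polynomial extension. Your route works once the ind-smooth-implies-regular lemma is supplied (and it then handles both directions of the equivalence in one stroke via flat local maps with regular fibres); the paper's route is more self-contained in that it only invokes standard formal-smoothness facts. (Incidentally, your parenthetical ``regular of finite Krull dimension $\Rightarrow$ excellent'' is false --- there are non-excellent discrete valuation rings --- but nothing in your argument relies on it.)
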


The construction itself is not new. 
Indeed, we shall show that 
some of the infinite dimensional noetherian rings constructed by Nagata 
are excellent (Theorem \ref{t-main2}). 
Let $B$ be the ring appearing in Notation \ref{n-1}. 
As proved by Nagata, $B$ is an infinite dimensional noetherian ring. 
Thus it suffices to show that $B$ is excellent. 
To this end, we first check that any local ring of $B$ is essentially of finite type over a field. 
This immediately implies that $B$ is a universally catenary G-ring, 
i.e. $B$ satisfies the properties (2) and (3) of Definition \ref{d-exc}. 
Thus what is remaining is to prove that $B$ satisfies the J2 property, i.e. (4) of Definition \ref{d-exc}. 
One of key results is that for any ring $R$, 
the ring homomorphism $R \to R[x_1, x_2, \cdots]$ to a polynomial ring with infinitely many variables is formally smooth. 
Indeed, this result enables us to compare a polynomial ring of finitely many variables and one of infinitely many variables.

\medskip

\textbf{Acknowledgement:} 
The author would like to thank K. Kurano and S. Takagi for answering questions. 
He is also grateful to the referee for reading the paper carefully and for many useful comments.
The author was funded by EPSRC. 

\section{Preliminaries}

\subsection{Notation}

Throughout this paper, all rings are assumed to be commutative and 
to have unity elements for multiplications. 

For a noetherian ring $A$, the {\em regular locus} $\Reg\,(A)$ of $A$ is defined by 
$${\rm Reg}\,(A):=\{\p \in \Spec\,A\,|\,A_{\p}\text{ is regular}\}.$$
A ring homomorphism $\varphi:A \to B$ of noetherian rings is {\em regular} if 
for any prime ideal $\p$ of $A$ and any field extension $A_{\p}/\p A_{\p} \subset L$ of finite degree, 
the noetherian ring $B \otimes_A L$ is regular.

\begin{dfn}\label{d-exc}
A ring $A$ is {\em excellent} if 
\begin{enumerate}
\item 
$A$ is a noetherian ring, 
\item 
$A$ is universally catenary, 
\item 
$A$ is a G-ring, i.e. for any prime ideal $\p$ of $A$, 
the $\p A_{\p}$-adic completion $A_{\p} \to \widehat{A_{\p}}$ is regular,  and 
\item 
$A$ is J2, 
i.e. for any finitely generated $A$-algebra $B$, 
the regular locus ${\rm Reg}\,(B)$ of $B$ is an open subset of $\Spec\,B$. 
\end{enumerate}
\end{dfn}

For some fundamental properties of excellent rings, we refer to 
\cite[Ch. IV, \S 7]{Gro65}, \cite[Section 34]{Mat80} and \cite[\S 32]{Mat89}.

\medskip

A ring homomorphism $\varphi:A \to B$ is a {\em localisation} 
if there exist a multiplicative subset $S$ of $A$ and ring homomorphisms
$$\varphi:A \xrightarrow{\alpha} S^{-1}A \xrightarrow{\theta, \simeq} B$$
where $\alpha$ is the ring homomorphism induced by $S$ and $\theta$ is an isomorphism of rings. 
In this case, $\varphi$ is called the {\em localisation induced by} $S$.

A ring homomorphism $\varphi:A \to B$ is {\em essentially of finite type} 
if there are ring homomorphisms 
$$\varphi:A \xrightarrow{\varphi_1} B' \xrightarrow{\varphi_2} B$$
such that $\varphi_1$ is of finite type and $\varphi_2$ is a localisation.

\begin{rem}\label{r-eft}
If $\varphi:A \to B$ and $\psi:B \to C$ be ring homomorphisms essentially of finite type, 
then also the composite ring homomorphism $\psi \circ \varphi:A \to C$ is essentially of finite type. 
\end{rem}

\subsection{Formally smooth homomorphisms}

In this subsection, we summarise some fundamental properties of formally smooth homomorphisms. 
Let us start by recalling the definition, which is extracted from \cite[Ch. 0, Definition 19.3.1]{Gro64}. 

\begin{dfn}\label{d-fsm}
A ring homomorphism $\varphi:A \to B$ is {\em formally smooth} 
if, for any $A$-algebra $C$, any surjective ring homomorphism $\pi:C \to D$ whose kernel is a nilpotent ideal of $C$ 
and any $A$-algebra homomorphism $u:B \to D$, 
there exist an $A$-algebra homomorphism $v:B \to C$ such that $u=\pi \circ v$. 
\end{dfn}

\begin{rem}
\begin{enumerate}
\item The original definition (\cite[Ch. 0, Definition 19.3.1]{Gro64}) 
treats topological rings, 
whilst we restrict ourselves to considering the rings equipped with the discrete topologies. 
\item 
In \cite[\S 25]{Mat89}, formally smooth homomorphisms are called 0-smooth. 
\end{enumerate}
\end{rem}

\begin{lem}\label{l-fsm-basic}
The following assertions hold. 
\begin{enumerate}
\item 
If $\varphi:A \to B$ and $\psi:B \to C$ are formally smooth ring homomorphisms, 
then also $\psi\circ \varphi:A \to C$ is formally smooth. 
\item 
Let $\varphi:A \to B$ and $A \to A'$ be ring homomorphisms. 
If $\varphi$ is formally smooth, then 
also the induced ring homomorphism $A' \to B\otimes_A A'$ is formally smooth. 
\item 
Let $A$ be a ring and let $S$ be a multiplicatively closed subset of $A$. 
Then the induced ring homomorphism $A \to S^{-1}A$ is formally smooth. 
\end{enumerate}
\end{lem}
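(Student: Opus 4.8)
The plan is to verify each assertion directly from the infinitesimal lifting property of Definition \ref{d-fsm}, the bulk of the work being formal diagram chasing. In all three parts I fix a generic test situation consisting of a target algebra, a surjection onto a quotient by a nilpotent ideal, and a homomorphism to be lifted. To avoid clashing with the letters $A, B, C$ occurring in the statement, I write the generic surjection as $\pi\colon T \to T/N$ with $N$ a nilpotent ideal, and I supply the relevant algebra structure on $T$ as dictated by the source ring under consideration.

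For (1), suppose given an $A$-algebra $T$, a surjection $\pi$ as above, and an $A$-algebra homomorphism $u\colon C \to T/N$; I must lift it. The idea is to peel off the two maps one at a time. First, the composite $u\circ\psi\colon B \to T/N$ is an $A$-algebra homomorphism, so formal smoothness of $\varphi$ produces an $A$-algebra lift $w\colon B \to T$ with $\pi\circ w = u\circ\psi$. This $w$ promotes $T$ to a $B$-algebra, turns $\pi$ into a $B$-algebra surjection with nilpotent kernel, and makes $u$ a $B$-algebra homomorphism. Formal smoothness of $\psi$ then yields a $B$-algebra lift $v\colon C \to T$ with $\pi\circ v = u$; being a $B$-algebra map, $v$ is in particular an $A$-algebra map, which is the lift sought.

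For (2), write $B' = B\otimes_A A'$ and let $u\colon B' \to T/N$ be an $A'$-algebra homomorphism into a nilpotent quotient. Restricting along the canonical map $B \to B'$ gives an $A$-algebra homomorphism $B \to T/N$, which lifts, by formal smoothness of $\varphi$, to an $A$-algebra homomorphism $w\colon B \to T$. The maps $w$ and the structure map $A' \to T$ agree over $A$, so the universal property of the tensor product glues them into an $A'$-algebra homomorphism $v\colon B' \to T$. Since $B'$ is generated over $A'$ by the image of $B$, the identity $\pi\circ v = u$ reduces to the already established equality on $B$.

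For (3), the one genuinely nonformal ingredient is the observation that an element of $T$ whose image in $T/N$ is a unit is itself a unit, because $1-n$ is invertible whenever $n$ is nilpotent. Granting this, let $u\colon S^{-1}A \to T/N$ be an $A$-algebra homomorphism. Every $s\in S$ has unit image in $S^{-1}A$, hence unit image in $T/N$, so its image under the structure map $A \to T$ is a unit in $T$. The universal property of localisation then factors $A \to T$ uniquely through an $A$-algebra homomorphism $v\colon S^{-1}A \to T$, and since both $u$ and $\pi\circ v$ restrict to the structure map $A \to T/N$, they coincide. I expect this unit-lifting fact to be the crux of the argument, although it is elementary; the remaining parts are routine.
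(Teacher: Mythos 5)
Your argument is correct in all three parts, but it is not the same as the paper's: for assertions (1) and (2) the paper simply cites \cite[Ch.~IV, Remarques 17.1.2(i) and Proposition 17.1.3(ii)(iii)]{Gro67}, whereas you verify the lifting property directly from Definition \ref{d-fsm}. Your two-step lift for (1) (first lift $u\circ\psi$ along $\varphi$, then use the resulting $B$-algebra structure on $T$ to lift $u$ along $\psi$) and your tensor-product gluing for (2) are the standard diagram chases, and they are sound --- in particular you correctly note the point that is easy to overlook in (1), namely that the lift $v$ is automatically an $A$-algebra map because $w\circ\varphi$ is the original structure map; the payoff of your route is a self-contained proof at the cost of a page of routine checking. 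For assertion (3), which is the only part the paper proves from scratch, your argument coincides with the paper's: both rest on the observation that an element of $T$ mapping to a unit of $T/N$ is itself a unit (the paper makes the geometric-series inverse $(1+x+\cdots+x^{n-1})$ explicit where you invoke invertibility of $1-n$ for $n$ nilpotent), followed by the universal property of localisation and the fact that a homomorphism out of $S^{-1}A$ is determined by its restriction to $A$ (cf. \cite[Proposition 3.1]{AM69}).
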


\begin{proof}
The assertions (1) and (2) follow from 
\cite[Ch. IV, Remarques 17.1.2(i) and Proposition 17.1.3(ii)(iii)]{Gro67}. 

We prove (3). 
Take a commutative diagram of ring homomorphisms: 
$$\begin{CD}
S^{-1}A @>u>> D=C/I\\
@AA\varphi A @AA\pi A\\
A @>\psi >> C, 
\end{CD}$$
where $I$ is an ideal of $C$ 
satisfying $I^n=0$ for some $n \in \Z_{>0}$, 
and both $\varphi$ and $\pi$ are the induced ring homomorphisms. 
It suffices to prove that 
there exists a ring homomorphism $v:S^{-1}A \to C$ commuting with 
the maps appearing in the above diagram. 
Take $s \in S$. 
Then it holds that 
$$\pi(\psi(s))=u(\varphi(s)) \in (C/I)^{\times}.$$
In particular, there are elements $c \in C$ and $x \in I$ such that 
$$\psi(s)c=1-x.$$ 
It follows from $I^n=0$ that 
$$\psi(s)c(1+x+\cdots+x^{n-1})=(1-x)(1+x+\cdots+x^{n-1})=1.$$
Thus, we get $\psi(s) \in C^{\times}$. 
Hence, there exists a ring homomorphism $v:S^{-1}A \to C$ such that $\psi=v \circ \varphi$. 
Also the equality $u=\pi \circ v$ holds, 
since $\psi=v \circ \varphi$ implies 
$u \circ \varphi=\pi \circ v \circ \varphi$ (cf. \cite[Proposition 3.1]{AM69}). 
Thus (3) holds. 
\end{proof}

\begin{lem}\label{l-f-sm-poly}
Let $R$ be a ring and let $R[x_1, x_2, \cdots]$ be a polynomial ring over $R$ with countably many variables. 
Then the induced ring homomorphism $j: R \to R[x_1, x_2, \cdots]$ is flat and formally smooth. 
\end{lem}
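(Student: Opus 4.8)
The plan is to prove flatness and formal smoothness separately, and in each case to reduce from the infinite-variable polynomial ring to the finite-variable case, where the corresponding facts are classical.

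For flatness, I would first recall that for any finite subset of variables, the inclusion $R \to R[x_1, \dots, x_n]$ is flat, since a polynomial ring over $R$ is a free $R$-module and free modules are flat. The ring $R[x_1, x_2, \cdots]$ is the union (directed colimit) of the subrings $R[x_1, \dots, x_n]$ as $n$ ranges over the positive integers. Since a directed colimit of flat $R$-modules is flat, the map $j: R \to R[x_1, x_2, \cdots]$ is flat. This part is routine.

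For formal smoothness, let me set up the lifting problem from Definition \ref{d-fsm}: given an $R$-algebra $C$, a surjection $\pi: C \to D$ with nilpotent kernel $I$ (say $I^m = 0$), and an $R$-algebra homomorphism $u: R[x_1, x_2, \cdots] \to D$, I must construct an $R$-algebra homomorphism $v: R[x_1, x_2, \cdots] \to C$ with $u = \pi \circ v$. The key observation is that, because $\pi$ is surjective, for each variable $x_i$ I may simply choose any preimage $c_i \in C$ of $u(x_i) \in D$. Since a polynomial ring is free as a commutative $R$-algebra on the set of variables, the assignment $x_i \mapsto c_i$ extends uniquely to an $R$-algebra homomorphism $v: R[x_1, x_2, \cdots] \to C$. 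By construction $\pi(v(x_i)) = \pi(c_i) = u(x_i)$ for every $i$, and since both $\pi \circ v$ and $u$ are $R$-algebra homomorphisms agreeing on the generators $x_i$, they agree everywhere; hence $u = \pi \circ v$. Notably, this argument never uses nilpotence of $I$ at all: the universal property of the polynomial ring together with surjectivity of $\pi$ already suffices. Thus formal smoothness holds, and in fact this shows that a polynomial ring over $R$ on any set of variables is formally smooth over $R$.

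The step I would expect to require the most care is not any single estimate but rather the clean invocation of the universal property in the infinite-variable setting: one should be confident that $R[x_1, x_2, \cdots]$ is the free commutative $R$-algebra on the countable set $\{x_1, x_2, \dots\}$, so that specifying images of all the $x_i$ simultaneously determines a well-defined $R$-algebra homomorphism. Because each element of $R[x_1, x_2, \cdots]$ involves only finitely many variables, this poses no difficulty, but it is the point on which the whole argument rests. Alternatively, one could derive formal smoothness by writing $j$ as the directed colimit of the maps $R \to R[x_1, \dots, x_n]$, each of which is formally smooth (being smooth, hence formally smooth), and appealing to stability of formal smoothness under such colimits; however, the direct lifting argument above is more elementary and self-contained, so I would present that.
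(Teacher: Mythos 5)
Your proof is correct and matches the paper's in substance: the paper notes that $R[x_1, x_2, \cdots]$ is a free $R$-module (hence flat) and cites \cite[Ch.~0, Corollaire 19.3.3]{Gro64} for formal smoothness, which is exactly the fact you establish directly via the universal property of the polynomial ring. Your direct lifting argument (including the observation that nilpotence of the kernel is never needed) is the standard proof of the cited result, so the two approaches coincide.
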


\begin{proof}
Since $R[x_1, x_2, \cdots]$ is a free $R$-module, it is also flat over $R$. 
It follows from \cite[Ch. 0, Corollaire 19.3.3]{Gro64} that $j$ is formally smooth. 
\end{proof}

\begin{lem}\label{l-fsm-perfect}
Let $k_0$ be a perfect field and let $K$ be a field containing $k_0$. 
Let $R$ be a local ring essentially of finite type over $K$. 
Then the following are equivalent. 
\begin{enumerate}
\item 
$R$ is a regular local ring. 
\item 
$R$ is formally smooth over $k_0$. 
\end{enumerate}
\end{lem}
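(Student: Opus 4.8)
The plan is to prove the equivalence by relating both conditions to the behaviour of the module of K\"ahler differentials, using the Jacobian-type criterion for formal smoothness that is available for local algebras essentially of finite type over a field. Write $R$ as a localisation of a finitely generated $K$-algebra; since $R$ is local, I may write $R = (K[y_1,\dots,y_n]/I)_\p$ for some ideal $I$ and some prime $\p$ containing $I$.

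\smallskip

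\emph{From (1) to (2).} First I would reduce to formal smoothness over $K$ and then compose with the base field. Since $k_0$ is perfect, the field extension $K/k_0$ is separable, and separable (even non-algebraic) field extensions are formally smooth; this is the standard fact that a field extension is formally smooth over $k_0$ if and only if it is separable (and every extension of a perfect field is separable). Thus $K$ is formally smooth over $k_0$. By Lemma \ref{l-fsm-basic}(1) it then suffices to show that a regular local ring $R$ essentially of finite type over $K$ is formally smooth over $K$. For this I would invoke the local criterion: a local $K$-algebra $R$ essentially of finite type over $K$ is formally smooth over $K$ precisely when $R$ is geometrically regular over $K$, and when $K$ is the ground field this reduces, via the regularity of $R$ together with the fact that $\Omega_{R/K}$ is free of the expected rank, to ordinary regularity. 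The cleanest route is to use that $R \to \widehat{R}$ and the structure theorem: a regular local ring containing a field and essentially of finite type over it is formally smooth over that field, which one extracts from \cite[\S 25, \S 28]{Mat89} or \cite[Ch. 0, \S 19 and Ch. IV, \S 17]{Gro64, Gro67}.

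\smallskip

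\emph{From (2) to (1).} Here I would use that formal smoothness over any ring forces regularity of the fibres, and in particular of $R$ itself. Concretely, if $R$ is formally smooth over $k_0$, then $R$ is a flat $k_0$-algebra whose geometric fibres are regular; since $k_0$ is a field, $R$ itself is one such fibre after base change, and geometric regularity over a field implies that the local ring $R$ is regular. Equivalently, formal smoothness of $R/k_0$ implies that $\Omega_{R/k_0}$ is a projective (hence free, $R$ being local) $R$-module and that the naturally associated conormal sequence splits, which by the Jacobian criterion forces $R$ to be regular. I would cite \cite[\S 28, Theorem 28.7]{Mat89} for the equivalence of formal smoothness with geometric regularity in the essentially-of-finite-type setting.

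\smallskip

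The main obstacle I expect is handling the passage between the two base fields $k_0$ and $K$ correctly, since $K$ need not be algebraic or finitely generated over $k_0$. The point that makes this work is precisely that $k_0$ is \emph{perfect}: this guarantees $K/k_0$ is separable and hence $K$ is formally smooth over $k_0$, which is what lets me freely compose formal smoothness over $k_0$ with formal smoothness over $K$ in the chain $k_0 \to K \to R$. Without perfection of $k_0$ the field extension could fail to be formally smooth and the equivalence would genuinely break, so the care is to isolate that hypothesis as the only place where perfection is used and otherwise to reduce everything to the well-understood equivalence ``formally smooth over a field $\iff$ geometrically regular, $\iff$ regular when working over that same field.''
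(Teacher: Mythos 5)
Your direction $(2)\Rightarrow(1)$ is essentially the paper's argument: formal smoothness over $k_0$ is equivalent to geometric regularity over $k_0$ (the paper cites \cite[Ch.~0, Proposition 22.6.7(i)]{Gro64}), and geometric regularity implies regularity. That part is fine.

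The direction $(1)\Rightarrow(2)$, however, contains a false intermediate claim. You propose to factor $k_0 \to K \to R$ and to show separately that $K/k_0$ is formally smooth (true, since $k_0$ is perfect) and that a regular local ring essentially of finite type over $K$ is formally smooth over $K$. The latter is not true when $K$ is imperfect, and $K$ here is an arbitrary field containing $k_0$ (in the application it is the fraction field of a polynomial ring in infinitely many variables over $\mathbb{F}_p$, which is very far from perfect). Concretely, take $k_0=\F_p$, $K=\F_p(t)$ and $R=K[x]/(x^p-t)$: then $R$ is a field, hence a regular local ring of finite type over $K$, but $R/K$ is an inseparable field extension, so $R$ is not formally smooth over $K$ (equivalently, $R\otimes_K \overline{K}$ is non-reduced, so $R$ is not geometrically regular over $K$). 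Note that the lemma itself still holds in this example, because $R=\F_p(t^{1/p})$ is separable, hence formally smooth, over $\F_p$. Your claim that geometric regularity over the ground field ``reduces to ordinary regularity'' is exactly the step that requires the ground field to be perfect; you have located the use of perfection in the wrong place (in the formal smoothness of $K/k_0$, which is true but not where the difficulty lies). The repair is the paper's route: do not pass through $K$ at all, but observe that since $k_0$ is perfect, regularity of $R$ is equivalent to geometric regularity of $R$ over $k_0$, and then invoke Grothendieck's theorem that for a noetherian local ring containing a field, formal smoothness over that field is equivalent to geometric regularity over it (\cite[Ch.~0, Proposition 22.6.7(i)]{Gro64}; note this general form is needed because $R$ is typically \emph{not} essentially of finite type over $k_0$, only over $K$).
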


\begin{proof}
Since $k_0$ is perfect, 
the field extension $k_0 \subset K$ is a (possibly non-algebraic) separable extension. 

Assume (1). 
Since $k_0$ is a perfect field, we have that $R$ is geometrically regular over $k_0$ 
in the sense of \cite[Ch. IV, D\'efinition 6.7.6]{Gro65}. 
It follows from \cite[Ch. 0, Proposition 22.6.7(i)]{Gro64} that 
$R$ is formally smooth over $k_0$. 
Thus (2) holds. 

Assume (2). 
Then \cite[Ch. 0, Proposition 22.6.7(i)]{Gro64} implies that $R$ is geometrically regular over $k_0$ 
(cf. \cite[Ch. IV, D\'efinition 6.7.6]{Gro65}). 
In particular, $R$ is regular. 
Thus (1) holds. 
\end{proof}

\section{Proof of the main theorem}

In this section, we prove the main theorem of this paper (Theorem \ref{t-main2}). 
We start by summarising notation used in this section (Notation \ref{n-1}). 
Our goal is to show that the ring $B$ appearing in Notation \ref{n-1} is an excellent ring 
of infinite dimension. 
The ring $B$ is a special case of \cite[Example 1 in Appendix]{Nag62}. 
In particular, it has been already known that $B$ is a noetherian ring of infinite dimension 
(Theorem \ref{t-Nagata}, Proposition \ref{p-local-rings2}). 
Although Lemma \ref{l-avoidance} and Lemma \ref{l-local-rings} are probably used 
in \cite[Example 1 in Appendix]{Nag62} implicitly, 
we give proofs of them for the sake of completeness. 
Indeed, these lemmas also yield the result that 
any local ring of $B$ is essentially of finite type over a field (Proposition \ref{p-local-rings2}), 
which immediately implies that $B$ is a universally catenary G-ring 
(Proposition \ref{p-Gring}, Proposition \ref{p-UC}). 
Then what is remaining is the J2 property, which is settled in Proposition \ref{p-J2}.

\begin{nota}\label{n-1}
\begin{enumerate}
\item Let $k$ be a field. 
\item 
Let 
$$A:=k[x^{(1)}_1, x^{(2)}_1, x^{(2)}_2, x^{(3)}_1, x^{(3)}_2, x^{(3)}_3, \cdots, 
x^{(n)}_1, \cdots, x^{(n)}_n, \cdots].$$
be the polynomial ring over $k$ 
with infinitely many variables $x^{(1)}_1, x^{(2)}_1, x^{(2)}_2, \cdots$. 
For any subset $\mathcal N$ of $\Z_{>0}$, 
we set 
$$A(\mathcal N):=k\left[\bigcup_{n \in \mathcal N}\{x^{(n)}_1, \cdots, x^{(n)}_{n}\}\right] \subset A.$$
In particular, $A=A(\Z_{>0})$. 
For any $n \in \Z_{>0}$, we set 
$$A(n):=A(\{n\})=k[x^{(n)}_1, \cdots, x^{(n)}_n].$$
\item 
For any $n \in \Z_{>0}$, 
we set $\m_{n}:=A(n)x^{(n)}_1+\cdots+A(n)x^{(n)}_1$, 
which is a maximal ideal of $A(n)$. 
It is clear that $\m_nA$ is a prime ideal of $A$. 
\item 
We set 
$$S:=A \setminus \bigcup_{n \in \Z_{>0}} \m_n A
=\bigcap_{n \in \Z_{>0}}(A \setminus \m_n A).$$
Since each $A \setminus \m_n A$ is a multiplicative subset of $A$, so is $S$. 
Let 
$$B:=S^{-1}A.$$
\end{enumerate}
\end{nota}

\begin{lem}\label{l-2-localisation}
Let $A$ be a ring. 
Let 
$$\varphi:A \to S^{-1}A, \quad \psi:A \to T^{-1}A$$
be the ring homomorphisms induced by multiplicative subsets $S$ and $T$ of $A$. 
Assume that there exists a ring homomorphism $\alpha:S^{-1}A \to T^{-1}A$ 
such that $\alpha \circ \varphi=\psi$. 
Set 
$$U:=\{\zeta \in S^{-1}A\,|\, \alpha(\zeta) \in (T^{-1}A)^{\times}\}.$$
Then $\alpha$ is the localisation induced by $U$, i.e. 
there exists a commutative diagram of ring homomorphisms: 
$$\begin{CD}
A @>\varphi >> S^{-1}A\\
@VV\psi V @VV\rho V\\
T^{-1}A @<\theta, \,\,\simeq << U^{-1}(S^{-1}A)
\end{CD}$$
such that $\rho$ is the ring homomorphism induced by $U$, $\theta$ is an isomorphism of rings 
and $\alpha=\theta \circ \rho$. 
\end{lem}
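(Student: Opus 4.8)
The plan is to construct $\theta$ via the universal property of localisation and then check separately that it is bijective. First I would record that $U$ is a multiplicative subset of $S^{-1}A$: it contains $1$ and is closed under multiplication because the units of $T^{-1}A$ form a multiplicative set and $\alpha$ is a ring homomorphism. By the very definition of $U$, the homomorphism $\alpha$ carries every element of $U$ into $(T^{-1}A)^{\times}$. Hence the universal property of the localisation $\rho : S^{-1}A \to U^{-1}(S^{-1}A)$ produces a unique ring homomorphism $\theta : U^{-1}(S^{-1}A) \to T^{-1}A$ satisfying $\theta \circ \rho = \alpha$. The commutativity of the required square, namely $\psi = \theta \circ \rho \circ \varphi$, is then immediate from the hypothesis $\alpha \circ \varphi = \psi$.

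It then remains to prove that $\theta$ is an isomorphism. For surjectivity I would write an arbitrary element of $T^{-1}A$ as $\psi(a)\psi(t)^{-1}$ with $a \in A$ and $t \in T$. The point is that $\varphi(t) \in U$, since $\alpha(\varphi(t)) = \psi(t)$ is a unit, so $\rho(\varphi(t))$ is invertible in $U^{-1}(S^{-1}A)$, and applying $\theta$ to $\rho(\varphi(a))\rho(\varphi(t))^{-1}$ recovers the chosen element.

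For injectivity, suppose $\theta(\xi) = 0$ and write $\xi = \rho(\zeta)\rho(u)^{-1}$ with $\zeta \in S^{-1}A$ and $u \in U$. Since $\alpha(u)$ is a unit, this forces $\alpha(\zeta) = 0$. Writing $\zeta = \varphi(a)\varphi(s)^{-1}$ with $a \in A$ and $s \in S$, and using that $\varphi(s)$, hence $\alpha(\varphi(s)) = \psi(s)$, is a unit, I would deduce $\psi(a) = 0$ in $T^{-1}A$. By the definition of the localisation $T^{-1}A$ this yields some $t \in T$ with $ta = 0$ in $A$; then $\varphi(t) \in U$ because $\alpha(\varphi(t)) = \psi(t)$ is a unit, while $\varphi(t)\zeta = \varphi(ta)\varphi(s)^{-1} = 0$ in $S^{-1}A$. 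Therefore $\rho(\zeta) = 0$ and so $\xi = 0$.

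I expect the injectivity step to be the main obstacle, as it requires unwinding the defining annihilation property of a localisation twice: once to pass from $\alpha(\zeta) = 0$ to the vanishing of $\psi(a)$ in $T^{-1}A$, and again to extract an explicit annihilator $t \in T$, after which one must verify that the resulting element lands in $U$ so that it genuinely witnesses $\rho(\zeta) = 0$. By contrast, the existence of $\theta$ and its surjectivity are comparatively formal consequences of the universal property together with the observation that elements of $S$ and of $T$ become units under the relevant maps.
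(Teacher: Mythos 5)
Your proof is correct, but it takes a genuinely different route from the paper's. The paper does not build $\theta$ by hand: it verifies a single clearing-of-denominators condition --- namely that for every $\zeta=a/s\in S^{-1}A$ there exists $t\in T$ with $\varphi(t)\zeta\in\varphi(A)$, obtained by writing $\alpha(a/s)=b/t_1$ in $T^{-1}A$ and extracting the relation $t_2t_1a=t_2sb$ in $A$ --- and then concludes by citing \cite[Theorem 4.3]{Mat89} with $f:=\psi$, $g:=\varphi$, $h:=\alpha$. You instead reprove the relevant criterion from scratch: you get $\theta$ from the universal property after noting $\alpha(U)\subset(T^{-1}A)^{\times}$, and you check surjectivity and injectivity directly. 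Your argument is complete; in particular the injectivity step is handled correctly --- you rightly observe that $\psi(s)=\alpha(\varphi(s))$ is a unit because $\varphi(s)$ is a unit of $S^{-1}A$ and $\alpha$ is a homomorphism (not because $s$ lies in $T$), and that the annihilator $t\in T$ of $a$ gives $\varphi(t)\in U$, which is exactly what is needed to conclude $\rho(\zeta)=0$. What the paper's route buys is brevity, since Matsumura's theorem packages all of the localisation bookkeeping; what yours buys is self-containedness and an explicit description of $\theta$ and of why it is bijective, at the cost of redoing that standard bookkeeping.
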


\begin{proof}
Fix $a \in A$ and $s \in S$. 
We have that $(a/1)\cdot (s/1)^{-1}=\alpha(a/s)=b/t_1$ for some $b \in A$ and $t_1 \in T$. 
Then we obtain the equation $t_2t_1a=t_2sb$ in $A$ for some $t_2 \in T$. 
Hence, the equation $(t_2t_1a)/s=(t_2b)/1$ holds in $S^{-1}A$. 
To summarise, for any $\zeta:=a/s \in S^{-1}A$, there exists $t:=t_2t_1 \in T$ such that $\varphi(t)\zeta \in \varphi(A)$. 
Then  the assertion holds 
by applying \cite[Theorem 4.3]{Mat89} for $f:=\psi, g:=\varphi$ and $h:=\alpha$. 
\end{proof}

\begin{lem}\label{l-avoidance}
We use Notation \ref{n-1}. 
Let $\mathfrak a$ be an ideal of $A$ such that $\mathfrak a \subset \bigcup_{n=1}^{\infty} \m_n A$. 
Then there exists a positive integer $n$ such that $\mathfrak a \subset \m_n A$. 
\end{lem}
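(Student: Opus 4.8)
The statement is an infinite version of prime avoidance, and the naive approach fails immediately: the classical prime avoidance lemma only handles a containment $\mathfrak a \subset P_1 \cup \cdots \cup P_r$ in a \emph{finite} union of primes, whereas here the union $\bigcup_{n} \m_n A$ is genuinely infinite. The resource that makes the statement true is the finiteness of the variable-support of any polynomial. Concretely, for each $n$ I would introduce the $k$-algebra endomorphism $\pi_n : A \to A$ sending every block-$n$ variable $x^{(n)}_1, \dots, x^{(n)}_n$ to $0$ and fixing all other variables, and first record the identity $\m_n A = \Ker(\pi_n)$, so that $f \in \m_n A$ if and only if $\pi_n(f) = 0$. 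The key consequence is that if $f$ is nonzero and no block-$n$ variable occurs in $f$, then $\pi_n(f) = f \ne 0$; hence a nonzero $f$ lies in $\m_n A$ for only finitely many $n$, namely those $n$ in the finite block-support $F(f) := \{ n : \text{some } x^{(n)}_i \text{ occurs in } f\}$.

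The plan is to argue by contradiction: assume $\mathfrak a \subset \bigcup_n \m_n A$ but $\mathfrak a \not\subset \m_n A$ for every $n$, and produce an element of $\mathfrak a$ lying outside the whole union. First I would dispose of the trivial cases: if $\mathfrak a = 0$ the conclusion holds with $n = 1$, and the hypothesis $\mathfrak a \subset \bigcup_n \m_n A$ forces $\mathfrak a$ to be proper (no $\m_n A$ contains $1$), so any nonzero $f \in \mathfrak a$ is a non-unit and has $F(f) \ne \emptyset$. Fix such an $f$ and write $F := F(f)$. Since $\mathfrak a \not\subset \m_n A$ for each of the finitely many $n \in F$, the classical (finite) prime avoidance lemma applies to the primes $\{\m_n A\}_{n \in F}$ and yields a single $g \in \mathfrak a$ with $\pi_n(g) \ne 0$ for all $n \in F$. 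Now choose a fresh block $p \notin F \cup F(g)$ (possible since $F \cup F(g)$ is finite while $\Z_{>0}$ is infinite) and set
$$h := f + x^{(p)}_1\, g \in \mathfrak a.$$
The claim to verify is that $\pi_n(h) \ne 0$ for every $n \in \Z_{>0}$, which contradicts $h \in \mathfrak a \subset \bigcup_n \m_n A$ and finishes the proof.

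The verification splits into three ranges of $n$, and the only subtlety — the main obstacle — is to avoid cancellation when combining $f$ and $g$, which is exactly what the fresh variable $x^{(p)}_1$ is for. Since neither $f$ nor $g$ involves block $p$, applying $\pi_n$ with $n \ne p$ leaves $x^{(p)}_1$ untouched and produces $\pi_n(h) = \pi_n(f) + x^{(p)}_1 \pi_n(g)$, a polynomial of degree at most one in the variable $x^{(p)}_1$ whose two coefficients cannot interfere. For $n \in F$ the degree-one coefficient is $\pi_n(g) \ne 0$; for $n \notin F \cup \{p\}$ the degree-zero coefficient is $\pi_n(f) = f \ne 0$; and for $n = p$ one computes directly $\pi_p(h) = \pi_p(f) = f \ne 0$. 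In every case $\pi_n(h) \ne 0$, i.e. $h \notin \m_n A$. I expect the bookkeeping of these three cases to be the most error-prone part, but each reduces to the single observation that passing to a fresh block turns the potentially cancelling sum $\pi_n(f) + \pi_n(g)$ into two independent coefficients of a linear polynomial; in particular the argument even pins down that $\mathfrak a \subset \m_n A$ for some $n$ in the finite set $F$.
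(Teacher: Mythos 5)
Your proof is correct, but it takes a genuinely different route from the paper's. The paper never argues by contradiction or constructs a single avoiding element; instead it truncates the ideal: setting $A_r := A(\{1,\cdots,r\})$, $\mathfrak a_r := \mathfrak a \cap A_r$ and $\Sigma_r := \{n \,:\, \mathfrak a_r \subset \m_n A\}$, it shows that the $\Sigma_r$ form a decreasing chain of nonempty sets (nonemptiness via finite prime avoidance applied to $\mathfrak a_r \subset \bigcup_{n=1}^{r}(\m_n A \cap A_r)$ inside $A_r$), at least one of which is finite, so $\bigcap_r \Sigma_r \neq \emptyset$; any $n$ in that intersection works because $\mathfrak a = \bigcup_r \mathfrak a_r$. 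That is a compactness-style argument run on the set of ``good'' indices. You instead assume no single $\m_n A$ contains $\mathfrak a$ and produce the explicit element $h = f + x^{(p)}_1 g \in \mathfrak a$ lying outside every $\m_n A$, with the fresh variable $x^{(p)}_1$ preventing cancellation between the two coefficients --- essentially the additive trick from the proof of finite prime avoidance itself, made available here by the infinitely many unused blocks. Both arguments rest on the same two finiteness facts (a nonzero polynomial lies in $\m_n A$ for only finitely many $n$, via $\m_n A = \Ker(\pi_n)$, plus finite prime avoidance), and your case analysis on $\pi_n(h)$ for $n \in F$, $n = p$, and $n \notin F \cup \{p\}$ is complete and correct; what your version buys is a one-step localization of the problem to the finite block-support $F(f)$ and an explicit witness, while the paper's nested-sets formulation trades that for a more routine verification with no element construction.
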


\begin{proof}
If $\mathfrak a=\{0\}$, then there is nothing to show. 
Thus, we may assume that $\mathfrak a \neq \{0\}$. 
For any $r \in \Z_{>0}$, set 
\[
A_r:=A(\{1, \cdots, r\}),\,\,\, \mathfrak a_r:=\mathfrak a \cap A_r
\,\,\,{\rm and} \,\,\,
\Sigma_r:=\{n \in \Z_{>0}\,|\,\mathfrak a_r \subset \m_nA\}.
\]
We now show the following assertions: 
\begin{enumerate}
\item $\Sigma_1 \supset \Sigma_2 \supset \cdots$. 
\item There exists $r_0 \in \Z_{>0}$ such that  $\Sigma_{r_0}$ is a finite set. 
\item For any $r \in \Z_{>0}$, $\Sigma_r$ is a non-empty set. 
\end{enumerate}
The assertion (1) follows from $\mathfrak a_r=\mathfrak a_{r+1} \cap A_r$. 
Let us prove (2). 
Since $\mathfrak a \neq \{0\}$, there exist $r_0 \in \Z_{>0}$ and $f \in \mathfrak a_{r_0} \setminus \{0\}$. 
Then we have that $f \not\in \m_nA$ for any $n \geq r_0+1$, 
which implies $\mathfrak a_{r_0} \not\subset \m_n A$ for any $n \geq r_0+1$. 
In particular,  $\Sigma_{r_0}$ is a finite set, as desired. 
We now show (3). 
Fix $r \in \Z_{>0}$. 
It follows from $\mathfrak a \subset \bigcup_{n=1}^{\infty} \m_n A$ that 
\[
\mathfrak a_r= \mathfrak a \cap A_r 
\subset \left(\bigcup_{n=1}^{\infty} \m_n A\right) \cap A_r
=\bigcup_{n=1}^{\infty} (\m_n A \cap A_r)=\bigcup_{n=1}^{r} (\m_n A \cap A_r).
\]
Thanks to the prime avoidance lemma \cite[Proposition 1.11]{AM69}, 
there exists $n \in \{1, \cdots, r\}$ such that $\mathfrak a_r \subset \m_n A \cap A_r$. 
Hence, (3) holds. 

By (1)--(3), we obtain $\bigcap_{r=1}^{\infty} \Sigma_r \neq \emptyset$. 
Take $n \in \bigcap_{r=1}^{\infty} \Sigma_r$. 
Then it holds that $\mathfrak a_r \subset \m_n A$ for any $r \in \Z_{>0}$. 
Hence we obtain  
\[
\mathfrak a= \bigcup_{r=1}^{\infty} \mathfrak a_r \subset \m_n A, 
\]
as desired. 
\end{proof}

\begin{lem}\label{l-local-rings}
We use Notation \ref{n-1}. 
Take $n \in \Z_{>0}$. 
Then $A_{\m_n A}$ is isomorphic to the local ring 
$K[x_1^{(n)}, \cdots, x_n^{(n)}]_{(x_1^{(n)}, \cdots, x_n^{(n)})}$ 
of the polynomial ring $K[x_1^{(n)}, \cdots, x_n^{(n)}]$ with respect to 
the maximal ideal $(x_1^{(n)}, \cdots, x_n^{(n)})$, 
where $K$ is the quotient field of $A(\Z_{>0} \setminus \{n\})$. 
\end{lem}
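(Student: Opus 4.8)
The plan is to view $A$ as a polynomial ring over the subring $A' := A(\Z_{>0}\setminus\{n\})$ and to compute the localisation in two stages. By the very definition of $A$ in Notation \ref{n-1}, we have $A = A'[x^{(n)}_1, \cdots, x^{(n)}_n]$, a polynomial ring in $n$ variables over $A'$, and $A'$ is an integral domain whose quotient field is $K$. Moreover $\m_n A$ is the ideal $(x^{(n)}_1, \cdots, x^{(n)}_n)A$ generated by these variables. Writing each element of $A$ uniquely as a polynomial in the $x^{(n)}_i$ with coefficients in $A'$, I observe that such an element lies in $\m_n A$ precisely when its constant term vanishes; in particular $\m_n A \cap A' = \{0\}$, so the multiplicative set $W := A'\setminus\{0\}$ is contained in $S_n := A \setminus \m_n A$.

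First I would localise $A$ at $W$. Since localisation commutes with adjoining polynomial variables, $W^{-1}A = K[x^{(n)}_1, \cdots, x^{(n)}_n]$, and $\m_n A$ extends to the maximal ideal $\mathfrak{M} := (x^{(n)}_1, \cdots, x^{(n)}_n)K[x^{(n)}_1, \cdots, x^{(n)}_n]$ at the origin. Because $W \subseteq S_n$, each element of $W$ becomes a unit in $A_{\m_n A} = S_n^{-1}A$, so the universal property of localisation produces a ring homomorphism $\alpha : W^{-1}A \to A_{\m_n A}$ compatible with the two maps out of $A$. Applying Lemma \ref{l-2-localisation} to the localisations $A \to W^{-1}A$ and $A \to S_n^{-1}A$ together with $\alpha$ then shows that $\alpha$ is itself a localisation, namely the one induced by $U := \{\zeta \in W^{-1}A \mid \alpha(\zeta) \in (A_{\m_n A})^{\times}\}$; hence $A_{\m_n A} \cong U^{-1}(W^{-1}A)$.

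It remains to identify $U$ with $K[x^{(n)}_1, \cdots, x^{(n)}_n] \setminus \mathfrak{M}$, for then $U^{-1}(W^{-1}A) = K[x^{(n)}_1, \cdots, x^{(n)}_n]_{\mathfrak{M}}$ is exactly the local ring in the statement. Here I would use that $A_{\m_n A}$ is local: a fraction $g/t$ with $g \in A$ and $t \in W$ is a unit in $A_{\m_n A}$ if and only if $g \notin \m_n A$, that is, if and only if the constant term of $g$ is nonzero. Writing an arbitrary $\zeta \in W^{-1}A$ as $g/t$ with $g \in A$ and $t \in W$, its constant term in $K$ differs from that of $g$ only by the unit $t$, so $\alpha(\zeta)$ is a unit in $A_{\m_n A}$ exactly when $\zeta \notin \mathfrak{M}$. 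This yields $U = K[x^{(n)}_1, \cdots, x^{(n)}_n]\setminus \mathfrak{M}$ and completes the argument.

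I expect the only delicate point to be this last step, where one must check that the unit condition defining $U$ is genuinely detected by the constant term; this relies on $A_{\m_n A}$ being local with maximal ideal generated by the images of the $x^{(n)}_i$, and on the inclusion $W \subseteq S_n$ rendering the denominators harmless. The remaining ingredients---the decomposition $A = A'[x^{(n)}_1, \cdots, x^{(n)}_n]$, the equality $\m_n A \cap A' = \{0\}$, and the computation $W^{-1}A = K[x^{(n)}_1, \cdots, x^{(n)}_n]$---are routine. Should one wish to avoid Lemma \ref{l-2-localisation}, the same conclusion follows by localising in stages as $A_{\m_n A} = \overline{S_n}^{\,-1}(W^{-1}A)$ and comparing the saturations of $\overline{S_n}$ and $K[x^{(n)}_1, \cdots, x^{(n)}_n]\setminus\mathfrak{M}$.
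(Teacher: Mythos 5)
Your proof is correct and follows essentially the same route as the paper: both localise $A$ first at $W=A(\Z_{>0}\setminus\{n\})\setminus\{0\}$ to obtain $K[x_1^{(n)},\cdots,x_n^{(n)}]$ and then realise $A_{\m_nA}$ as a further localisation of that polynomial ring. The only cosmetic difference is that you invoke Lemma \ref{l-2-localisation} and identify the multiplicative set $U$ by a constant-term computation, whereas the paper cites \cite[Theorem 4.3, Corollary 1]{Mat89} and identifies the contracted prime $\n=\varphi^{-1}(\m_nA_{\m_nA})$ directly.
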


\begin{proof}
Let 
$$T:=A(\Z_{>0} \setminus \{n\}) \setminus \{0\}.$$
Then we get $T \subset A \setminus \m_n A$. 
In particular, we obtain a factorisation of ring homomorphisms:  
$$\psi:A \hookrightarrow K[x_1^{(n)}, \cdots, x_n^{(n)}]=T^{-1}A \overset{\varphi}{\hookrightarrow} A_{\m_nA}.$$
It follows from \cite[Theorem 4.3, Corollary 1]{Mat89} that 
$A_{\m_nA}$ is isomorphic to $K[x_1^{(n)}, \cdots, x_n^{(n)}]_{\n}$, 
where $\n:=\varphi^{-1}(\m_n A_{\m_nA})$. 
Since $\n$ is a prime ideal of $K[x_1^{(n)}, \cdots, x_n^{(n)}]$ such that 
$x_1^{(n)}, \cdots, x_n^{(n)} \in \n$, 
it holds that $\n=(x_1^{(n)}, \cdots, x_n^{(n)})$, as desired. 
\end{proof}

\begin{thm}\label{t-Nagata}
We use Notation \ref{n-1}. 
Then $B$ is a noetherian ring. 
\end{thm}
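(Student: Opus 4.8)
The plan is to apply the classical noetherianity criterion of Nagata (cf. \cite{Nag62}): a ring $R$ is noetherian provided that \emph{(a)} $R_{\m}$ is noetherian for every maximal ideal $\m$ of $R$, and \emph{(b)} every nonzero element of $R$ is contained in only finitely many maximal ideals of $R$. Accordingly, the first task is to pin down the maximal ideals of $B$, and then to verify \emph{(a)} and \emph{(b)} for $R=B$.

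First I would determine the maximal ideals of $B=S^{-1}A$. Under the order-preserving bijection between the primes of $S^{-1}A$ and those primes $\q$ of $A$ with $\q \cap S=\emptyset$, and since $S=A\setminus\bigcup_{n}\m_nA$, a prime $\q$ of $A$ survives in $B$ precisely when $\q\subset\bigcup_{n}\m_nA$. By Lemma \ref{l-avoidance} every such $\q$ is contained in a single $\m_nA$; hence each surviving prime lies in some $\m_nA$. Using that the blocks of variables are disjoint, an inclusion $\m_nA\subset\m_mA$ forces $n=m$ (otherwise $x_1^{(n)}$ would lie in the ideal generated by the distinct variables $x_1^{(m)},\dots,x_m^{(m)}$), so each $\m_nA$ is maximal among the surviving primes. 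Therefore the maximal ideals of $B$ are precisely the ideals $\m_nB:=(\m_nA)B$ for $n\in\Z_{>0}$.

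Next, for \emph{(a)}, localising $B$ at $\m_nB$ coincides with localising $A$ at $\m_nA$, so $B_{\m_nB}\cong A_{\m_nA}$, and by Lemma \ref{l-local-rings} this is isomorphic to $K[x_1^{(n)},\dots,x_n^{(n)}]_{(x_1^{(n)},\dots,x_n^{(n)})}$, a localisation of a polynomial ring in finitely many variables over a field, which is noetherian. For \emph{(b)}, take a nonzero $\beta\in B$ and write $\beta=f/s$ with $f\in A\setminus\{0\}$ and $s\in S$. Since $s\notin\m_nA$ for every $n$, we have $\beta\in\m_nB$ if and only if $f\in\m_nA$. As $f$ involves only finitely many variables, $f\in A(\{1,\dots,r_0\})$ for some $r_0$; for $n>r_0$ every monomial of $f$ is free of $x_1^{(n)},\dots,x_n^{(n)}$, so $f\notin\m_nA$. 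Thus $\{n:\beta\in\m_nB\}\subset\{1,\dots,r_0\}$ is finite, which is exactly the computation underlying the proof of Lemma \ref{l-avoidance}. Nagata's criterion then yields that $B$ is noetherian.

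I expect the main obstacle to be the identification of the maximal ideals of $B$ with the family $\{\m_nB\}$: this is the step where the infinite prime-avoidance statement of Lemma \ref{l-avoidance} is genuinely indispensable, since a priori a prime avoiding $S$ need not be contained in any single $\m_nA$. Once that is in place, conditions \emph{(a)} and \emph{(b)} are comparatively routine, reducing respectively to Lemma \ref{l-local-rings} and to the finite-support property of polynomials.
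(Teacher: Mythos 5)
Your proof is correct and follows the same route as the paper: the paper's proof is a one-line appeal to Nagata's criterion (E1.1) on p.~203 of \cite{Nag62} (together with his Example 1), and your argument is exactly the verification of that criterion's two hypotheses, using Lemma \ref{l-avoidance} to identify the maximal ideals of $B$ as the $\m_nB$ and Lemma \ref{l-local-rings} for the local rings. The only difference is that you write out the details that the paper delegates to Nagata's book.
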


\begin{proof}
It follows from \cite[(E1.1) in page 203]{Nag62} that $B$ is a noetherian ring. 
Note that the altitude of a ring $R$ means the dimension of $R$ (cf. \cite[Section 9]{Nag62}). 
\end{proof}

\begin{prop}\label{p-local-rings2}
We use Notation \ref{n-1}. 
Then the following hold. 
\begin{enumerate}
\item 
For any positive integer $n$, 
there exists a unique maximal ideal $\m'_n$ of $B$ such that $\dim B_{\m'_n}=n$. 
Furthermore, $B_{\m'_n}$ is isomorphic to 
$K[x_1, \cdots, z_n]_{(z_1, \cdots, z_n)}$, 
where $K[z_1, \cdots, z_n]$ is the polynomial ring over a field $K$ 
and $K[x_1, \cdots, z_n]_{(z_1, \cdots, z_n)}$ is the local ring with respect to the maximal ideal $(z_1, \cdots, z_n)$. 
\item 
Any maximal ideal $\n$ of $B$ is equal to $\m'_n$ for some $n \in \Z_{>0}$. 
\item 
For any prime ideal $\p$ of $B$, 
the local ring $B_{\p}$ is essentially of finite type over a field.  
\end{enumerate}
\end{prop}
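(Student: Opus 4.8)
The plan is to translate everything into statements about primes of $A$ via the localisation $A \to B = S^{-1}A$. Under this map the primes of $B$ correspond bijectively to the primes $\q$ of $A$ with $\q \cap S = \emptyset$, equivalently $\q \subset A \setminus S = \bigcup_{m} \m_m A$; by Lemma \ref{l-avoidance} each such $\q$ lies in a single $\m_n A$. The first step is to record, for a prime $\p$ of $B$ corresponding to such a $\q$, the isomorphism $B_\p \cong A_\q$. Since $\q \subset \m_n A \subset \bigcup_m \m_m A$ we have $S \subset A \setminus \q$, so the two localisations $A \to B$ and $A \to A_\q$ are comparable by a canonical map $\alpha \colon B \to A_\q$. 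Applying Lemma \ref{l-2-localisation} to $\alpha$, the associated set $U = \{\zeta \in B : \alpha(\zeta) \in A_\q^\times\}$ is precisely $B \setminus \p$ (an element of $B$ is inverted in $A_\q$ iff it avoids the contraction of the maximal ideal of $A_\q$, which equals $\p$), whence $B_\p \cong U^{-1}B \cong A_\q$.

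With this in hand, part (3) is immediate: as $\q \subset \m_n A$, the ring $A_\q$ is a localisation of $A_{\m_n A}$, and Lemma \ref{l-local-rings} identifies $A_{\m_n A}$ with $K[x^{(n)}_1, \dots, x^{(n)}_n]_{(x^{(n)}_1, \dots, x^{(n)}_n)}$, which is essentially of finite type over the field $K$. Since a localisation is essentially of finite type and such homomorphisms compose (Remark \ref{r-eft}), $B_\p \cong A_\q$ is essentially of finite type over $K$.

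For the maximal ideals I would show that the $\m_n A$ are exactly the maximal members of the family of primes of $A$ contained in $\bigcup_m \m_m A$. Indeed, $\m_n A$ lies in this family; and if a prime $\q$ satisfied $\m_n A \subsetneq \q \subset \bigcup_m \m_m A$, then Lemma \ref{l-avoidance} would give $\q \subset \m_{n'} A$ and hence $\m_n A \subsetneq \m_{n'} A$, which is impossible because for $n \neq n'$ these ideals are generated by disjoint sets of variables. Consequently each $\m_n A$ extends to a maximal ideal $\m'_n := \m_n B$ of $B$; and any maximal ideal $\n$ of $B$ corresponds to some $\q \subset \m_n A$, so $\n \subset \m'_n \subsetneq B$ forces $\n = \m'_n$. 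This proves (2).

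Finally, part (1) follows from the same localisation identity: $B_{\m'_n} \cong A_{\m_n A} \cong K[x^{(n)}_1, \dots, x^{(n)}_n]_{(x^{(n)}_1, \dots, x^{(n)}_n)}$ by Lemma \ref{l-local-rings}, which is the local ring of a polynomial ring in $n$ variables at the origin; in particular $\dim B_{\m'_n} = n$ and it has the stated form. As distinct $\m'_m$ yield local rings of distinct dimension, $\m'_n$ is the unique maximal ideal with $\dim B_{\m'_n} = n$. The only step requiring care is the localisation bookkeeping behind $B_\p \cong A_\q$; once Lemma \ref{l-2-localisation} is set up correctly, the rest is a formal consequence of Lemmas \ref{l-avoidance} and \ref{l-local-rings}.
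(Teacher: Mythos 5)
Your proof is correct and follows exactly the route the paper intends: the paper's own proof simply states that (1) and (2) follow from Lemma \ref{l-avoidance} and Lemma \ref{l-local-rings} and that these imply (3), and your argument supplies precisely the omitted bookkeeping (the prime correspondence under $A \to S^{-1}A$, the identification $B_{\p} \cong A_{\q}$ via Lemma \ref{l-2-localisation}, and the maximality of the $\m_n A$ among primes contained in $\bigcup_m \m_m A$). No gaps.
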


\begin{proof}
The assertions (1) and (2) follow from Lemma \ref{l-avoidance} and Lemma \ref{l-local-rings}. 
Then (1) and (2) imply (3). 
\end{proof}

\begin{prop}\label{p-Gring}
We use Notation \ref{n-1}. 
Then, for any prime ideal $\p$ of $B$, 
the $\p B_{\p}$-adic completion $B_{\p} \to \widehat{B_{\p}}$ is regular.  
\end{prop}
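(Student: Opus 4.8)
The plan is to use the explicit description of the local rings of $B$ from Proposition \ref{p-local-rings2} to reduce the G-ring property to the classical fact that local rings essentially of finite type over a field are G-rings. Concretely, I would argue as follows.

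\medskip

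First I would invoke Proposition \ref{p-local-rings2}(3), which tells us that for any prime ideal $\p$ of $B$, the local ring $B_{\p}$ is essentially of finite type over a field $K$. The key point is then to recall the standard theorem (see \cite[\S 32]{Mat89}) that any ring essentially of finite type over a field is a G-ring; equivalently, its formal fibres are geometrically regular. Since the G-ring property for $B_{\p}$ is, by Definition \ref{d-exc}(3), precisely the assertion that the completion map $B_{\p} \to \widehat{B_{\p}}$ is regular, the desired conclusion would follow once we know $B_{\p}$ is a G-ring. I would phrase this carefully: what we want is that the map $B_{\p} \to \widehat{B_{\p}}$ is regular, and this is exactly the condition defining a G-local-ring, so it suffices to exhibit $B_{\p}$ as a local ring essentially of finite type over a field and cite the classical result.

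\medskip

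A subtle point worth addressing is the logical structure of the definition of G-ring in Definition \ref{d-exc}(3): the condition there quantifies over \emph{all} primes of the ring under consideration, whereas here I am handed a single local ring $B_{\p}$. I would therefore note that a local ring essentially of finite type over a field is itself a G-ring in the full sense, and that being a G-ring is inherited in the appropriate way; but for the present statement all that is literally required is the regularity of the single completion map $B_{\p}\to\widehat{B_{\p}}$, which is the formal-fibre statement for the maximal ideal of $B_{\p}$. Since $B_{\p}$ is essentially of finite type over $K$, it is excellent, hence a G-ring, and in particular its own completion map at its maximal ideal is regular.

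\medskip

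I do not expect a serious obstacle here: the entire content has been front-loaded into Proposition \ref{p-local-rings2}(3), and what remains is the invocation of the well-known fact that rings essentially of finite type over a field are excellent (and a fortiori G-rings). The only care needed is bookkeeping—matching the quantifier in Definition \ref{d-exc}(3) against the single completion map for $B_{\p}$, and making sure the field $K$ over which $B_{\p}$ is essentially of finite type is the same in both the statement of Proposition \ref{p-local-rings2} and the cited classical theorem (any field will do, as the result holds over an arbitrary field). Thus the proof should be short: cite Proposition \ref{p-local-rings2}(3), then cite the excellence of rings essentially of finite type over a field, and conclude.
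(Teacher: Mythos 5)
Your proposal is correct and matches the paper's proof, which likewise deduces the statement from Proposition \ref{p-local-rings2}(3) together with the classical fact that a local ring essentially of finite type over a field is excellent, hence a G-ring (the paper cites \cite[Ch.~IV, Scholie 7.8.3(ii)]{Gro65} for this). Your extra remark on matching the quantifier in Definition \ref{d-exc}(3) against the single completion map is sound bookkeeping that the paper leaves implicit.
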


\begin{proof}
The assertion follows from Proposition \ref{p-local-rings2}(3) (cf. \cite[Ch. IV, Scholie 7.8.3(ii)]{Gro65}). 
\end{proof}

\begin{prop}\label{p-UC}
We use Notation \ref{n-1}. 
Then $B$ is regular. In particular, $B$ is a universally catenary ring. 
\end{prop}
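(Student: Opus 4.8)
The plan is to establish regularity of $B$ by verifying that every localisation of $B$ at a prime ideal is a regular local ring, and then to obtain universal catenarity as a formal consequence. Since $B=S^{-1}A$, each prime ideal $\p$ of $B$ contracts under $A \to S^{-1}A=B$ to a prime ideal $\q$ of $A$ with $\q \cap S=\emptyset$, and the localisation $B_{\p}$ is then canonically isomorphic to $A_{\q}$. The condition $\q \cap S=\emptyset$ is, by the very definition of $S$ in Notation \ref{n-1}, equivalent to $\q \subset \bigcup_{n \in \Z_{>0}} \m_n A$. Hence it suffices to prove that $A_{\q}$ is a regular local ring for every prime ideal $\q$ of $A$ satisfying $\q \subset \bigcup_{n=1}^{\infty} \m_n A$.

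The key step is to apply Lemma \ref{l-avoidance} to the ideal $\q$. Since $\q \subset \bigcup_{n=1}^{\infty} \m_n A$, the lemma produces some $n \in \Z_{>0}$ with $\q \subset \m_n A$. Consequently $A_{\q}$ is the localisation of $A_{\m_n A}$ at the prime ideal $\q A_{\m_n A}$. By Lemma \ref{l-local-rings}, the ring $A_{\m_n A}$ is isomorphic to a localisation of a polynomial ring over a field at a maximal ideal, hence is a regular local ring. Since any localisation of a regular local ring at a prime ideal is again regular (Serre's characterisation of regularity via finite global dimension), it follows that $A_{\q}$, and therefore $B_{\p}$, is regular. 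As $\p$ was an arbitrary prime ideal of $B$, we conclude that $B$ is regular; recall that $B$ is noetherian by Theorem \ref{t-Nagata}, so this is the correct notion.

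Finally, I would deduce the ``in particular'' clause from regularity. A regular local ring is Cohen--Macaulay, so $B$ is locally Cohen--Macaulay, and hence is a Cohen--Macaulay ring; a Cohen--Macaulay ring is universally catenary (cf.\ \cite[Theorem 17.9]{Mat89}), which gives the assertion. I expect the only genuine subtlety to lie in the first reduction, namely matching prime ideals of $B$ with prime ideals of $A$ disjoint from $S$ and then using Lemma \ref{l-avoidance} to confine such a prime inside a \emph{single} $\m_n A$; once this reduction is in place, the regularity of $A_{\m_n A}$ is already known and the remaining arguments are standard facts about localisations of regular rings and about Cohen--Macaulay rings being universally catenary.
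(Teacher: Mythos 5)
Your proof is correct and follows essentially the same route as the paper: the paper cites Proposition \ref{p-local-rings2} (itself proved from Lemmas \ref{l-avoidance} and \ref{l-local-rings}) to see that every local ring of $B$ is a localisation of a polynomial ring over a field, hence regular, and then concludes universal catenarity via the Cohen--Macaulay property exactly as you do; you have merely inlined the proof of that proposition.
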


\begin{proof}
It follows from Proposition \ref{p-local-rings2} 
that $B$ is a noetherian regular ring. 
In particular, $B$ is universally catenary by \cite[Theorem 17.8 and Theorem 17.9]{Mat89}. 
\end{proof}

\begin{prop}\label{p-J2}
We use Notation \ref{n-1}. 
Let $D$ be a finitely generated $B$-algebra. 
Then the following hold. 
\begin{enumerate}
\item 
For any prime ideal $\q$ of $D$, the local ring $D_{\q}$ of $D$ with respect to $\q$ is 
essentially of finite type over a field. 
\item 
The regular locus $\Reg(D)$ of $D$ is an open subset of $\Spec\,D$. 
\end{enumerate}

\end{prop}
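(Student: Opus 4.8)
The plan is to establish (1) first and then derive (2) from it using the J2 property of fields together with a transfer of openness along a ring map of finite type. For part (1), the key input is Proposition \ref{p-local-rings2}(3), which says every local ring $B_{\p}$ is essentially of finite type over a field. Since $D$ is a finitely generated $B$-algebra, for any prime $\q$ of $D$ lying over $\p := \q \cap B$, the localisation $D_{\q}$ factors as a localisation of $B_{\p} \to D_{\p} := B_{\p} \otimes_B D$ at the prime corresponding to $\q$. The map $B \to D$ is of finite type, hence $B_{\p} \to D_{\p}$ is of finite type, and localising further at $\q$ is essentially of finite type. Composing with the fact that $B_{\p}$ is essentially of finite type over a field $K$, and using the transitivity recorded in Remark \ref{r-eft}, I conclude that $D_{\q}$ is essentially of finite type over $K$. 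Thus (1) holds.

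For part (2), the strategy is to reduce to the classical statement that every field is J2, i.e. that finitely generated algebras over a field have open regular locus. The subtlety is that $D$ is not itself finitely generated over a single field; rather, each of its local rings is essentially of finite type over a field, but the field $K$ depends on $\q$. So I cannot directly apply the J2 property of one fixed field to $D$. Instead I would argue locally: openness of $\Reg(D)$ is a local condition on $\Spec D$, so it suffices to show that every regular point $\q \in \Reg(D)$ has an open neighbourhood contained in $\Reg(D)$.

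Here is where the main obstacle lies. To produce such a neighbourhood I would like to find an affine open $\Spec D_g \subset \Spec D$ (for some $g \in D$) on which $D_g$ is a finitely generated algebra over a field, or at least over an excellent base, so that the J2 property applies. The natural candidate base is a maximal-ideal localisation $B_{\m'_n}$ from Proposition \ref{p-local-rings2}(1), which is a regular local ring essentially of finite type over a field and hence excellent; since excellence is inherited by finitely generated algebras and localisations, the corresponding localisation of $D$ would have open regular locus. The technical work is to verify that the regular points of $D$ can be covered by finitely many such localisations compatibly, and to check that openness of the regular locus in each piece glues to openness in $\Spec D$. The cleanest route is to invoke that each $D_{\q}$, being essentially of finite type over a field $K$ by part (1), is excellent, and then apply Nagata's criterion \cite[Theorem 32.2]{Mat89} (or the equivalent openness criterion for the regular locus) together with the fact that excellence of all local rings, combined with the G-ring property already established for $B$ and inherited by $D$, forces $\Reg(D)$ to be open.

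Concretely, I would structure the argument as follows: first show $D$ is a G-ring (its formal fibres are geometrically regular), which follows because $B$ is a G-ring by Proposition \ref{p-Gring} and the G-ring property is stable under finitely generated algebra extensions \cite[Theorem 32.1]{Mat89}. A noetherian G-ring whose regular locus behaves well is J2 precisely when it is what Matsumura calls a J2 ring; and a standard theorem states that a noetherian ring all of whose formal fibres are geometrically regular and which is, say, a quotient of an excellent ring, has open regular locus. The expected hard part is marshalling these stability results so that the field-dependence on $\q$ does not obstruct the global openness; I anticipate that the cleanest resolution uses that $\Reg(D)$ open is equivalent, for a G-ring $D$, to the excellence already verified piecewise, and that \cite[Ch. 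IV, Scholie 7.8.3(v)]{Gro65} or \cite[Theorem 32.2]{Mat89} packages this transfer, so that (2) follows formally from (1) and the G-ring property of $D$.
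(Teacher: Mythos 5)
Part (1) of your proposal is correct and is essentially the paper's own argument: factor $K \to B_{\p} \to B_{\p}\otimes_B D \to D_{\q}$ and use transitivity of ``essentially of finite type.''

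Part (2) has a genuine gap, and it is exactly at the point you flag as ``the technical work'' and ``the expected hard part.'' Your local strategy would need, for each $\q \in \Reg(D)$, an honest open neighbourhood of $\q$ in $\Spec D$ contained in $\Reg(D)$. The candidate you propose is built from $D\otimes_B B_{\m'_n}$, but $\Spec(D\otimes_B B_{\m'_n})$ is \emph{not} an open subset of $\Spec D$ (the $\Spec B_{\m'_n}$ only cover $\Spec B$ as sets, not as an open cover), so openness of the regular locus of $D\otimes_B B_{\m'_n}$ gives no open neighbourhood in $\Spec D$: a basic open $\Spec D_g$ around $\q$ will contain primes not lying over any prime inside $\m'_n$, and you have no control over those. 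Your fallback — that $(2)$ ``follows formally'' from $(1)$ and the G-ring property via \cite[Theorem 32.2]{Mat89} — does not close this: the G-ring property does not imply J2 for non-semilocal rings (the implication holds only in the semilocal case, \cite[Theorem 32.4]{Mat89}, and there are G-rings that are not J2), and invoking ``quotient of an excellent ring'' is circular since excellence of $B$ is what is being proved. Nagata's openness criterion would again require the same spreading-out you have not supplied.

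The paper resolves this by a descent in the opposite direction, and this is the key idea your proposal is missing (it is also the one advertised in the introduction). Writing $D=C/I$ with $C=B[y_1,\dots,y_m]$, the finitely many generators of $I$ involve only finitely many of the variables of $A$, so one can build a finitely generated algebra $D'$ over the excellent ring $B'=S'^{-1}A(\mathcal N)$ (essentially of finite type over the field $k$) with $D \simeq D'\otimes_{B'} B$. The inclusion $A(\mathcal N)\hookrightarrow A$ is flat and formally smooth because adjoining infinitely many polynomial variables is formally smooth (Lemma \ref{l-f-sm-poly}); base change and composition with localisations make $\delta:D'\to D$ flat and formally smooth, and $D'_{\q'}\to D_{\q}$ faithfully flat and formally smooth. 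Then $D_{\q}$ regular $\Rightarrow$ $D'_{\q'}$ regular by faithfully flat descent, and conversely $D'_{\q'}$ regular $\Rightarrow$ $k_0\to D'_{\q'}\to D_{\q}$ formally smooth $\Rightarrow$ $D_{\q}$ regular by Lemma \ref{l-fsm-perfect} (using part (1)). Hence $\Reg(D)=\widetilde{\delta}^{-1}(\Reg(D'))$ is the preimage of an open set under a continuous map, which is the openness statement you were unable to produce locally.
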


\begin{proof}
There exists a polynomial ring $C:=B[y_1, \cdots, y_m]$ over $B$ and an ideal $I$ of $C$ such that $D \simeq C/I$. 

Let us show (1). 
Let $\p$ be the pullback of $\q$ to $B$. 
By Proposition \ref{p-local-rings2}(3), 
there exist a field $K$ and a ring homomorphism 
$\zeta_0:K \to B_{\p}$ that is essentially of finite type. 
Consider the ring homomorphisms 
$$\zeta: K \xrightarrow{\zeta_0} B_{\p} \xrightarrow{\zeta_1} C \otimes_B B_{\p} \xrightarrow{\zeta_2} D \otimes_B B_{\p} \xrightarrow{\zeta_3} D_{\q}$$
where $\zeta_1, \zeta_2$ and $\zeta_3$ are the induced ring homomorphisms. 
Then each $\zeta_i$ is essentially of finite type (cf. Lemma \ref{l-2-localisation}). 
Therefore, also the composite ring homomorphism $\zeta$ is essentially of finite type. 
Thus (1) holds.

Let us prove (2). 
We can write 
$$I=\sum_{i=1}^r C f_i, \quad f_i=\sum_{(j_1, \cdots, j_m) \in \Z_{\geq 0}^m} \frac{g_{i, J}}{h_{i, J}} y_1^{j_1}\cdots y_m^{j_m}$$
for $J:=(j_1, \cdots, j_m) \in \Z_{\geq 0}^m$, 
$g_{i, J} \in A$ and $h_{i, J} \in S=\bigcap_{n \in \Z_{>0}} (A \setminus \m_nA)$. 
We can find a finite subset $\mathcal N$ of $\mathbb Z_{>0}$ such that 
$$\bigcup_{i, J} \{g_{i, J}, h_{i, J}\} \subset A(\mathcal N)=:A'.$$
We set 
$$S':=\bigcap_{n \in \mathcal N} (A' \setminus \m_nA'), \quad B':=S'^{-1}A', \quad C':=B'[y_1, \cdots, y_m].$$ 
Since 
$$S \cap A'=\left(\bigcap_{n \in \Z_{>0}} (A \setminus \m_nA)\right) \cap A'
=\bigcap_{n \in \Z_{>0}} (A' \setminus \m_nA)=\bigcap_{n \in \mathcal N} (A' \setminus \m_nA)=S',$$
we have that $h_{i, J} \in S'$. 
In particular, we get $f_i \in C'$. 
We set 
$$I':=\sum_{i=1}^r C' f_i, \quad D':=C'/I'.$$
Thanks to $S' \subset S$, we get a commutative diagram 
$$\begin{CD}
A' @>>> B' @>>> C' @>>> D'\\
@VV\alpha V @VV\beta V @VV\gamma V @VV\delta V\\
A @>>> B @>>> C @>>>D.
\end{CD}$$
Note that the middle and right square are cocartesian and there is a factorisation of ring homomorphisms 
$$\beta: B'=S'^{-1}A' \xrightarrow{S'^{-1}\alpha} S'^{-1}A \xrightarrow{\beta'} B,$$ 
where $\beta'$ is a localisation (cf. Lemma \ref{l-2-localisation}). 

Since $\alpha:A' \to A$ is flat and formally smooth (Lemma \ref{l-f-sm-poly}), 
it follows from Lemma \ref{l-fsm-basic}(2) that also 
$S'^{-1}\alpha:B'\to S'^{-1}A$
is flat and formally smooth. 
Furthermore, by Lemma \ref{l-fsm-basic}(3), 
$S'^{-1}A \xrightarrow{\beta'} B$ is flat and formally smooth. 
Therefore, it holds by Lemma \ref{l-fsm-basic}(1) that $\beta:B' \to B$ is flat and formally smooth. 
In particular, $\gamma:C' \to C$ and $\delta:D' \to D$ are flat and formally smooth by Lemma \ref{l-fsm-basic}(2). 

\begin{claim}\label{c-J2}
Let $\q$ be a prime ideal of $D$ and set $\q':=\delta^{-1}(\q)$. 
Then $D_{\q}$ is regular if and only if $D'_{\q'}$ is regular. 
\end{claim}

\begin{proof}[Proof of Claim \ref{c-J2}]
There are ring homomorphisms 
$$\epsilon:D'_{\q'} \xrightarrow{\delta \otimes_{D'} D'_{\q'} } D \otimes_{D'} D'_{\q'} \xrightarrow{\theta} D_{\q}$$ 
such that $\theta$ is a localisation (Lemma \ref{l-2-localisation}). 
Since $\delta:D' \to D$ is flat and formally smooth, 
both $\delta \otimes_{D'} D'_{\q'}$ and $\theta$ are flat and formally smooth 
(Lemma \ref{l-fsm-basic}(2)(3)). 
Therefore, $\epsilon$ is 
is faithfully flat and formally smooth 
(Lemma \ref{l-fsm-basic}(1) and \cite[Theorem 7.3(ii)]{Mat89}).

Thus, if $D_{\q}$ is regular, then so is $D'_{\q'}$ by \cite[Theorem 23.7]{Mat89}. 
Conversely, assume that $D'_{\q'}$ is regular. 
By Lemma \ref{l-fsm-perfect} and the fact that the prime field $k_0$ contained in $k$ is a perfect field, 
we have that the induced ring homomorphism $k_0 \to D'_{\q}$ is formally smooth. 
Since also $\epsilon:D'_{\q'} \to D_{\q}$ is formally smooth, 
it holds by Lemma \ref{l-fsm-basic}(1) that $k_0 \to D_{\q}$ is formally smooth. 
Since $D_{\q}$ is essentially of finite type over a field by (1), 
it follows again from Lemma \ref{l-fsm-perfect} that $D_{\q}$ is regular. 
This completes the proof of Claim \ref{c-J2}. 
\end{proof}

Let us go back to the proof of Proposition \ref{p-J2}(2). 
Let 
$$\widetilde{\delta}:\Spec\,D \to \Spec\,D'$$
be the continuous map induced by $\delta$. 
It follows from Claim \ref{c-J2} that 
$$\widetilde{\delta}^{-1}(\Reg\,(D'))=\Reg\,D.$$
Since $\Reg\,(D')$ is an open subset of $\Spec\,D'$, 
$\Reg\,D$ is an open subset of $\Spec\,D$, as desired. 
Thus (2) holds.  
This completes the proof of Proposition \ref{p-J2}. 
\end{proof}

\begin{thm}\label{t-main2}
We use Notation \ref{n-1}. 
Then $B$ is a regular excellent ring such that $\dim B=\infty$.  
\end{thm}

\begin{proof}
By Theorem \ref{t-Nagata}, $B$ is a noetherian ring. 
Thanks to Proposition \ref{p-Gring}, Proposition \ref{p-UC} and Proposition \ref{p-J2}, 
we see that $B$ is an excellent ring. 
It follows from Proposition \ref{p-local-rings2}(1)(2) that $B$ is regular and infinite dimensional.  
\end{proof}


\begin{thebibliography}{000000}
%%%%%%%
%\bibitem[]{}{},{\em },{}.


\bibitem[AM69]{AM69}
{M. F. Atiyah, I. G. McDonald},
{\em Introduction to commutative algebra, }
{Addison-Wesley Publishing Co., Reading, Mass.-London-Don Mills, Ont. 1969.}

\bibitem[Gro64]{Gro64}
{A. Grothendieck},
{\em \'El\'ements de g\'eom\'etrie alg\'ebrique. IV. 
\'Etude locale des sch\'emas et des morphismes de sch\'emas. I.}
{Inst. Hautes \'Etudes Sci. Publ. Math. No. {\textbf{20}} 1964.}


\bibitem[Gro65]{Gro65}
{A. Grothendieck},
{\em \'El\'ements de g\'eom\'etrie alg\'ebrique. IV. 
\'Etude locale des sch\'emas et des morphismes de sch\'emas. II.}
{Inst. Hautes \'Etudes Sci. Publ. Math. No. {\textbf{24}} 1965.}




\bibitem[Gro67]{Gro67}
{A. Grothendieck},
{\em \'El\'ements de g\'eom\'etrie alg\'ebrique. IV. 
\'Etude locale des sch\'emas et des morphismes de sch\'emas. IV.}
{Inst. Hautes \'Etudes Sci. Publ. Math. No. {\textbf{32}} 1967.}



\bibitem[Mat80]{Mat80}
{H. Matsumura},
{\em Commutative algebra},
{Second edition. Mathematics Lecture Note Series, {\textbf{56}}. 
Benjamin/Cummings Publishing Co., Inc., Reading, Mass., 1980.}



\bibitem[Mat89]{Mat89}
{H. Matsumura},
{\em Commutative ring theory},
{Cambridge Studies in Advanced Mathematics, vol. {\textbf{8}}, Cambridge University Press, 
Cambridge, 1989. Translated from the Japanese by M. Reid}.


\bibitem[Nag62]{Nag62}
{M. Nagata},
{\em Local rings}
{Interscience Tracts in Pure and Applied Mathematics, No. {\textbf{13}} 
Interscience Publishers a division of John Wiley and Sons  New York-London 1962.}




\end{thebibliography}
\end{document}